\def\modd#1 #2{#1\ ({\rm mod}\ #2)}
\theoremstyle{plain}
\newtheorem{theorem}{Theorem}
\newtheorem{corollary}[theorem]{Corollary}
\newtheorem{lemma}[theorem]{Lemma}
\newtheorem{observation}[theorem]{Observation}
\theoremstyle{definition}
\theoremstyle{remark}
\DeclareMathOperator{\sgn}{sgn}
\def\ng{N{\o}rg{\aa}rd}
\def\Enn{\mathbb{N}}
\def\Zee{\mathbb{Z}}
\author{Christopher Drexler-Lemire and Jeffrey Shallit\footnote{Corresponding author.} \\
School of Computer Science \\
University of Waterloo \\
Waterloo, ON  N2L 3G1 \\
Canada \\
{\tt drexlerlemirec@gmail.com} \\
{\tt shallit@cs.uwaterloo.ca} }
\title{Notes and Note-Pairs in \ng's Infinity Series}
\begin{document}

\maketitle

\begin{abstract}
The Danish composer Per \ng\ defined the ``infinity series''
${\bf s} = (s(n))_{n \geq 0}$ by the rules
$s(0) = 0$,
$s(2n) = -s(n)$ for $n \geq 1$, and $s(2n+1) = s(n)+1$ for $n \geq 0$;
it figures
prominently in many of his compositions.  Here we give several new
results about this sequence:  first, the set of
binary representations of the positions
of each note forms a context-free language that is not
regular; second, a complete characterization of exactly which
note-pairs appear; third, that consecutive occurrences of identical
phrases are widely separated.  We also consider to what extent the
infinity series is unique.
\end{abstract}

\section{Introduction}

The Danish composer Per \ng\  constructed an infinite sequence of
integers, ${\bf s} = (s(n))_{n \geq 0}$,
called by him the
{\it Uendelighedsr{\ae}kken} or ``infinity series'',\footnote{Mathematicians
would call the ``infinity series'' a sequence, not a series, but we have chosen to retain the original terminology when referring to \ng's sequence.} 
using the rules
$$ s(n) = \begin{cases}
	0, & \text{if $n = 0$;} \\
	-s(n/2), & \text{if $n$ even;} \\
	s({{n-1} \over 2}) + 1, & \text{if $n$ odd.}
\end{cases}$$	
Starting at some base note, such as $G = 0$, this sequence specifies
the number of half-steps away from the base note, with positive numbers
representing notes of higher pitch and negative numbers representing
notes of lower pitch.  
It figures prominently in many of his compositions, such as 
{\it Voyage into the Golden Screen} \cite{Norgard:2010} and
{\it Symphony No.~2}.

The following table gives the first few terms of the sequence.

\begin{table}[H]
\begin{center}
\begin{tabular}{c|cccccccccccccccccccc}
$n$ & 0 & 1 & 2 & 3 & 4 & 5 & 6 & 7 & 8 & 9 & 10 & 11 & 12 & 13 & 14 
& 15 & 16 & 17 & 18 & 19 \\
\hline
$s(n)$ & 0 & 1 & $-1$ & 2 & 1 & 0 & $-2$ & 3 & $-1$ & 2 & 0 & 1
& 2 & $-1$ & $-3$ & 4 & 1 & 0 & $-2$ & 3 \\
\end{tabular}
\end{center}
\end{table}

Although the infinity series has received some study
\cite{Kullberg:1996,Mortensen:2014a}, it has a rich mathematical
structure that has received little attention.
In this article we examine some novel aspects of the sequence.

We fix some notation used throughout the paper.  By $(n)_2$ we mean the
binary string, having no leading zeros,
representing $n$ in base $2$.  Thus, for example, $(43)_2 = 101011$.
Note that $(0)_2$ is the
empty string $\epsilon$.  If $w$ is a binary string, possibly
with leading zeros, then by $[w]_2$ we
mean the integer represented by $w$.  Thus, for example, $[0101]_2 = 5$.

By a {\it block} we mean a finite list of consecutive terms of the
sequence.  When we interpret the sequence musically, we call this
a {\it phrase}.  The block of length $j$ beginning at position $i$
of the infinity series is denoted by ${\bf s}[i..i+j-1]$.  If $x$
is a block, then by $|x|$ we mean the length of, or number of notes in,
the block $x$.

We recall two basic facts about the infinity series, both of
which follow immediately from the defining recurrence.

\begin{observation} If
a number $a$ occurs at an even position $n = 2k$, then
$1-a$ occurs at position $n = 2k+1$. If a number $b$ occurs
at an odd position $n = 2k+1$, then $1-b$ occurs at position $n = 2k$.
\label{obs1}
\end{observation}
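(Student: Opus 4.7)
The plan is to prove both halves of the observation by direct substitution into the defining recurrence, handling the boundary value $n = 0$ as a small special case.

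For the first claim, suppose $a = s(2k)$ with $n = 2k$. If $k \geq 1$, the recurrence gives $s(2k) = -s(k)$, so $s(k) = -a$; plugging this into the odd-case recurrence yields $s(2k+1) = s(k) + 1 = 1 - a$, which is exactly what we want. For $k = 0$ the recurrence $s(2k) = -s(k)$ is not formally asserted, but we can read off directly from $s(0) = 0$ and $s(1) = s(0) + 1 = 1$ that the pair $(a, 1-a) = (0, 1)$ still appears at positions $0, 1$.

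For the second claim, suppose $b = s(2k+1)$. Here the odd-case recurrence is available for all $k \geq 0$, giving $s(k) = b - 1$. If $k \geq 1$ the even-case recurrence then yields $s(2k) = -s(k) = 1 - b$. The only subtlety is $k = 0$: then $s(2k) = s(0) = 0$, and from $b = s(1) = 1$ we have $1 - b = 0$, so the identity still holds.

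The proof involves no real obstacle — it is purely a matter of applying each recurrence once and checking that the boundary case $n \in \{0,1\}$ does not cause a gap. The only thing to be careful about is that the recurrence $s(2n) = -s(n)$ is stated for $n \geq 1$ rather than $n \geq 0$, so the $k = 0$ case must be verified by hand rather than by substitution.
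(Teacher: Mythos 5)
Your proof is correct and is exactly the argument the paper has in mind — the paper simply asserts that the observation ``follows immediately from the defining recurrence,'' and your direct substitution, together with the careful check of the $k=0$ boundary case, fills in precisely that routine verification.
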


\begin{observation}
The infinity series is the fixed point of the
map $g$ that sends each integer $a$ to the pair $(-a, a+1)$.  
\label{obs2}
\end{observation}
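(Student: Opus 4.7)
The plan is to verify directly that applying $g$ termwise to the sequence ${\bf s}$ reproduces ${\bf s}$. Since $g(a) = (-a, a+1)$, applying $g$ to each term of $(s(0), s(1), s(2), \ldots)$ and concatenating the resulting pairs gives the sequence whose $(2k)$-th entry is $-s(k)$ and whose $(2k+1)$-st entry is $s(k)+1$. To conclude that this is again ${\bf s}$, I need exactly the two identities $s(2k) = -s(k)$ and $s(2k+1) = s(k)+1$ for all $k \geq 0$.

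For $k \geq 1$, these are literally the defining recurrences. The only subtlety is the boundary case $k=0$: the recurrence $s(2k) = -s(k)$ is stated in the definition only for $n = 2k \geq 2$, so I must check separately that $s(0) = -s(0)$, which holds because $s(0) = 0$ by the initial condition. The recurrence $s(2k+1) = s(k)+1$ at $k=0$ gives $s(1) = s(0)+1 = 1$, which is part of the original definition. Hence every position of the image sequence matches the corresponding position of ${\bf s}$.

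There is no serious obstacle here; the only thing worth flagging is the mild asymmetry in the defining recurrence (even case stated for $n \geq 1$, odd case for $n \geq 0$), which is why the base case $k = 0$ deserves an explicit line. The whole argument can be presented in two or three sentences.
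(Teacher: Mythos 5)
Your proof is correct and follows the same route the paper takes: the paper simply asserts that the observation ``follows immediately from the defining recurrence,'' and your argument is exactly that verification written out, with the additional (worthwhile) care of checking the $k=0$ boundary case $s(0)=-s(0)=0$.
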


\section{Evaluating the infinity series}

It is useful to have a formula to compute $s(n)$ directly from $(n)_2$,
the base-$2$ expansion of $n$.    The result below can be compared
with an essentially equivalent formulation by Mortensen \cite{Mortensen:2014b}.

\begin{lemma}
Let $a_1, a_2, \ldots, a_n, b_1, b_2, \ldots, b_n$ be
integers with $a_1, a_, \ldots, a_j \geq 0$ and
$b_1, b_2, \ldots, b_j \geq 1$.  
If $w = 1^{b_1} 0^{a_1} \cdots 1^{b_n} 0^{a_n}$
then
$$ s([w]_2) = \sum_{1 \leq j \leq n} (-1)^{a_j + \cdots + a_n} b_j .$$
\label{lem1}
\end{lemma}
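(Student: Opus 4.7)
The plan is to prove this by induction on $n$, the number of runs, after first rewriting the recurrence in a "bit at a time" form. The key observation is that since $[v0]_2 = 2[v]_2$ and $[v1]_2 = 2[v]_2 + 1$, the defining recurrence gives, for any binary string $v$,
\[ s([v0]_2) = -s([v]_2), \qquad s([v1]_2) = s([v]_2) + 1. \]
So reading the binary expansion from left to right, starting at $s(0) = 0$, each subsequent $0$ negates the running total and each $1$ increments it by one.

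For the base case $n = 1$, with $w = 1^{b_1} 0^{a_1}$, processing the $b_1$ ones takes $0$ to $b_1$, and the subsequent $a_1$ zeros negate, yielding $(-1)^{a_1} b_1$, which matches the formula.

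For the inductive step, write $w = w' \cdot 1^{b_n} 0^{a_n}$ where $w' = 1^{b_1} 0^{a_1} \cdots 1^{b_{n-1}} 0^{a_{n-1}}$. Applying the observation $b_n + a_n$ more times gives
\[ s([w]_2) = (-1)^{a_n}\bigl(s([w']_2) + b_n\bigr). \]
Substituting the inductive hypothesis for $s([w']_2)$ and distributing $(-1)^{a_n}$ shifts each exponent $a_j + \cdots + a_{n-1}$ to $a_j + \cdots + a_n$, while the leftover term $(-1)^{a_n} b_n$ supplies the $j = n$ summand, completing the induction.

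I do not expect any serious obstacle: the argument is essentially an unrolling of the recurrence. The only point that warrants a moment's care is that the hypotheses permit $a_n = 0$ (so $w$ may end in a $1$), and more generally the decomposition of $w$ into $1$- and $0$-runs is not forced to be "maximal" — with $a_j = 0$ allowed for $j < n$, two adjacent $1$-runs may be listed as separated by $0^0$. Both situations are harmless: since $(-1)^0 = 1$, collapsing a $0^0$ between two blocks $1^{b_j}$ and $1^{b_{j+1}}$ replaces the two terms $(-1)^{a_j + \cdots + a_n} b_j + (-1)^{a_{j+1} + \cdots + a_n} b_{j+1}$ with $(-1)^{a_{j+1} + \cdots + a_n}(b_j + b_{j+1})$, which is exactly what the merged block contributes, so the formula is consistent with the freedom in the decomposition.
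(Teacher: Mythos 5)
Your proof is correct and follows essentially the same route as the paper's: induction on the number of runs $n$, peeling off the final block $1^{b_n}0^{a_n}$ via the identity $s([w]_2) = (-1)^{a_n}(s([w']_2) + b_n)$ obtained by unrolling the recurrence one bit at a time. Your explicit statement of the bit-at-a-time form and the remark about $a_j = 0$ are nice clarifications but do not change the argument.
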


\begin{proof}
By induction on $n$.  The base case is $n = 1$.
In this case $ w = 1^{b_1} 0^{a_1}$, so
$[w]_2 = 2^{b_1 - 1} 2^{a_1}$.
Then $s([w]_2) = s( (2^{b_1} - 1) 2^{a_1}) = (-1)^{a_1} s(2^{b_1} - 1)
	= (-1)^{a_1} b_1 ,$
as desired.

For the induction step, assume the result is true for $n$.
Consider $w' = 1^{b_1} 0^{a_1} \cdots 1^{b_n} 0^{a_n} 1^{b_{n+1}} 0^{a_{n+1}} $.
Then, applying the rules of the recursion, we have
$s([w']_2) = (-1)^{a_{n+1}} s([w''_2])$, where
$w'' = 1^{b_1} 0^{a_1} \cdots 1^{b_n} 0^{a_n} 1^{b_{n+1}}$.
Again applying the rules, we have
$s([w'']_2) = b_{n+1} + s([w]_2)$.  
Putting this all together, and using induction, we get
$$s([w']_2) = (-1)^{a_{n+1}} ( b_{n+1} + s([w_2]) )$$
or
$$ s([w']_2) = \sum_{1 \leq j \leq n+1} (-1)^{a_j + \cdots + a_{n+1}} b_j,$$
as desired.
\end{proof}

\section{Note positions}

Given any note $a$, we can consider the set $s^{-1} (a)$ of natural numbers
$n$ such that $s(n) = a$.   For example, for $a = 0$, we have
$$ s^{-1} (0) = \{ 0, 5,
10,17,20,27,34,40,45,54,65,68,75,80,85,90,99,105,108,
\ldots \}.$$
It is then natural to wonder about the complexity of specifying these
note positions.  

The American linguist Noam Chomsky invented a famous hierarchy of
distinctions among formal languages \cite{Chomsky:1956}.  The two lowest
levels of this hierarchy are the regular languages (those accepted
by a finite-state machine) and the context-free languages (those
accepted by a finite-state machine with an auxiliary pushdown stack).
Here we show that the language of binary representations of $s^{-1} (a)$
is context-free, but not regular.

\begin{theorem} 
For each integer $a$, the language $L_a = (s^{-1} (a))_2$ is context-free
but not regular.
\end{theorem}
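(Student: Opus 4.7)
The plan is to base both parts of the proof on Lemma~\ref{lem1}. Any nonempty binary representation factors uniquely as $w = 1^{b_1} 0^{a_1} \cdots 1^{b_n} 0^{a_n}$ with $b_j \geq 1$, $a_j \geq 1$ for $j < n$, and $a_n \geq 0$, and membership in $L_a$ is then equivalent to the identity $\sum_{j=1}^{n} (-1)^{a_j + \cdots + a_n} b_j = a$. The crucial observation is that the sign attached to $b_j$ depends only on the parity of the number of $0$'s lying to the \emph{right} of the $j$-th run of $1$'s; this single fact drives both directions.

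For context-freeness, I will build a pushdown automaton that processes $w$ from right to left (formally, construct a PDA for the reversal of $L_a$ and appeal to closure of the context-free class under reversal). The PDA carries in its finite control a single sign bit, flipped on every $0$ read; this maintains the current value of $(-1)^{a_j + \cdots + a_n}$. Each $1$ then contributes one unit of the current sign to a ``signed accumulator'' stored on the stack. The stack is kept mono-typed: to add a $+1$ when the stack already holds $-$ symbols, the PDA pops a $-$ instead of pushing a $+$, and vice versa. Thus the height of the stack records the absolute value of the partial sum, while the top symbol (together with a bottom-of-stack marker) records its sign. When the input is exhausted, $\epsilon$-moves verify that exactly $|a|$ symbols of the correct sign remain, which is a finite-state check because $a$ is a fixed constant.

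For non-regularity, I will apply the pumping lemma. By Lemma~\ref{lem1}, when $a \geq 0$ the string $w_k = 1^{k} 0 1^{k+a}$ has $s([w_k]_2) = -k + (k+a) = a$, so $w_k \in L_a$ for every $k \geq 1$; when $a < 0$ one uses $1^{k-a} 0 1^{k}$ instead. If $L_a$ were regular with pumping length $p$, take $k > p$ and factor $w_k = xyz$ with $|xy| \leq p$ and $|y| \geq 1$; then $y$ is a nonempty block of $1$'s cut from the first run, and $xy^2 z = 1^{k+|y|} 0 1^{k+a}$ evaluates to $a - |y| \neq a$ under $s$, contradicting regularity.

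I expect the PDA design to be the main obstacle. The key realization is that the sign on each run of $1$'s is determined by the suffix of $w$, which forces a right-to-left scan (or, equivalently, the reversal trick), and one has to arrange the stack discipline so that positive and negative contributions cancel rather than coexist. Once that architecture is in place, both the correctness verification and the pumping argument are routine.
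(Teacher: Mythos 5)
Your proof is correct. The non-regularity half is essentially the paper's argument: both of you restrict attention to strings of the form $1^b 0 1^c$, use the consequence $s([1^b01^c]_2)=c-b$ of Lemma~\ref{lem1}, and pump inside the first run of $1$'s (the paper pumps down where you pump up, which is immaterial). The context-free half reaches the same destination by a slightly different road. The paper's pushdown automaton reads $(n)_2$ from the most significant bit and simulates the recursion directly: reading $0$ flips a sign bit held in the control (implementing $s(2n)=-s(n)$), and reading $1$ increments a signed unary counter kept on the stack (implementing $s(2n+1)=s(n)+1$), so no reversal is needed. You instead read from the least significant bit, driven by the closed form of Lemma~\ref{lem1} (the sign of each run of $1$'s depends only on the parity of the number of $0$'s to its right), and then invoke closure of the context-free languages under reversal. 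The stack discipline --- height records the absolute value of the running sum, with the sign in the finite control or top symbol, and cancellation rather than coexistence of opposite symbols --- is identical in both constructions, as is the final $\epsilon$-move check for exactly $|a|$ symbols of the right sign. Your version costs you the reversal closure theorem but buys a construction that is transparently tied to the explicit formula; the paper's version is marginally more economical in that the left-to-right recursion makes the forward scan work without any detour. Both treatments leave the same minor bookkeeping implicit (no leading zeros, the empty representation of $0$), so there is no gap in yours that is not also present in the paper's.
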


\begin{proof}
It suffices to explain how $L_a$ can be accepted by a pushdown automaton $M_a$.
We assume the reader is familiar with the basic notation and terminology
as contained, for example, in \cite{Hopcroft&Ullman:1979}.

The first part of the construction is the same for all $a$.

We will design $M_a$ such that, on input $n$ in base
2 (starting from the most significant digit), $M_a$ ends up 
with $|s(n)|$ counters on its stack, with the sign $m := \sgn(s(n))$
stored in the state.
We also assume there is an initial stack symbol $Z$.

To do this, we use the recursion
$s(2n) = -s(n)$ and $s(2n+1) = s(n)+1$.   As we read the bits of $n$,
\begin{itemize}
\item if the next digit read is 0, set $m := -m$;
\item if the next digit read is 1, and $m$ is 0 or $+1$, push a counter on the
stack, and set $m := +1$;
\item if the next digit read is 1, and $m = -1$, pop a counter from the stack
and change $m$ to 0 if $Z$ is now on the top of the stack.
\end{itemize}

The rest of $M_a$ depends on $a$.  From each state where $m = \sgn(a)$,
we allow an $\epsilon$-transition to a state that attempts to pop off
$|a|$ counters from the stack and accepts if and only if this succeeds,
the stored sign is correct,
and $Z$ is on top of the stack.  This completes the sketch of our
construction, and proves that
$L_a$ is context-free.

Next, we prove that $L_a$ is not regular.  Again, we assume the reader is
familiar with the pumping lemma for regular languages, as
described in \cite{Hopcroft&Ullman:1979}.  Consider
$L := L_a \ \cap \ 1^* 0 1^*$.  From Lemma~\ref{lem1}, we know that
if $n = [1^b 0 1^c]_2$, then $s(n) = c-b$.    It follows 
that 
$$L = \lbrace 1^i 0 1^{i+a} \ : \ i \geq 0 \text{ and } i+a \geq 0 \rbrace .$$
Let $n$ be the pumping lemma constant and set $N := n+|a|$.  
Choose $z = 1^N 0 1^{N+a}$.  Then $|z| \geq n$.  Suppose
$z = uvw$ with $|uv| \leq n$ and $|v| \geq 1$.   Then
$uw = 1^{N-|v|} 0 1^{N+a} \not\in L_a$, since $|v| \geq 1$.  This
contradiction proves that $L_a$ is not regular.
\end{proof}

Interpreted musically, one might say that the positions of every individual
note in the infinity series are determined by a relatively simple program
(specified by a pushdown automaton), but {\it not\/} by the very simplest kind
of program.  There are regularities in these
positions, but not finite-state regularities.

\section{Counting occurrences of notes}

Let us define $r_a (N) = | \{ n : 0 \leq i < 2^N \text{ and } s(i) = a \} |$,
the number of occurrences of the note $a$ in the first $2^N$ positions
of the infinity series.    

\begin{theorem}
For all integers $a$ and all $N \geq 1$ we have
$$r_a (N) =  {{N-1} \choose {\lfloor (N-a)/2 \rfloor}} .$$
\end{theorem}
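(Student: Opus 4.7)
The plan is to establish a simple two-term recurrence for $r_a(N)$ in $N$ using Observation~\ref{obs2}, and then verify by induction that the binomial expression in the theorem satisfies the same recurrence and the correct initial condition.

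First, by Observation~\ref{obs2} the block ${\bf s}[0..2^{N+1}-1]$ is obtained from ${\bf s}[0..2^N-1]$ by replacing each entry $b$ with the pair $(-b,\, b+1)$. Every occurrence of $a$ in the doubled block therefore arises in exactly one of two ways: as the first component of the image of $-a$ (since $-(-a) = a$), or as the second component of the image of $a - 1$ (since $(a-1)+1 = a$). This immediately gives
\[
r_a(N+1) \;=\; r_{-a}(N) + r_{a-1}(N).
\]
The base case $N = 1$ is then just the verification that ${\bf s}[0..1] = (0, 1)$ matches $\binom{0}{\lfloor(1-a)/2\rfloor}$, which equals $1$ for $a \in \{0,1\}$ and $0$ elsewhere.

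For the inductive step I would assume the formula for $N$, substitute into the recurrence, and try to collapse
\[
\binom{N-1}{\lfloor(N+a)/2\rfloor} \;+\; \binom{N-1}{\lfloor(N-a+1)/2\rfloor}
\]
into $\binom{N}{\lfloor(N+1-a)/2\rfloor}$. The key move is to apply the symmetry $\binom{N-1}{k} = \binom{N-1}{N-1-k}$ to the first summand: since $N + a$ and $N - a$ have the same parity, $N - 1 - \lfloor(N+a)/2\rfloor$ simplifies to $\lfloor(N-a-1)/2\rfloor$. The two lower indices are then $\lfloor(N+1-a)/2\rfloor$ and $\lfloor(N+1-a)/2\rfloor - 1$, and Pascal's rule folds them into the desired $\binom{N}{\lfloor(N+1-a)/2\rfloor}$.

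The main obstacle is the bookkeeping of the floor functions, which behave differently depending on whether $N - a$ is even or odd; I would dispatch this with a short case split to confirm that the two lower indices are indeed consecutive integers with the larger equal to $\lfloor(N+1-a)/2\rfloor$. A minor point is to check consistency with the convention $\binom{n}{k} = 0$ for $k < 0$ or $k > n$, so that the recurrence and the formula agree for values $a$ that do not actually appear in the block; this is automatic from the formula.
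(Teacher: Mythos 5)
Your proposal is correct and follows essentially the same route as the paper: the same recurrence $r_a(N+1)=r_{-a}(N)+r_{a-1}(N)$ obtained from the substitution $b\mapsto(-b,b+1)$, the same base case, and the same combination of the symmetry $\binom{N-1}{k}=\binom{N-1}{N-1-k}$ with Pascal's rule to close the induction. The only (cosmetic) difference is that you package the parity bookkeeping into the single identity $N-1-\lfloor(N+a)/2\rfloor=\lfloor(N-a-1)/2\rfloor$, where the paper writes out the two parity cases explicitly.
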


\begin{proof}
By induction on $N$.  The base case is $N = 1$, whence $N-1 = 0$.
Then ${0 \choose {\lfloor (N-a)/2 \rfloor}}$ is $1$ if
$\lfloor (1-a)/2 \rfloor = 0$ and $0$ otherwise.  This is
$1$ if $a \in \lbrace 0, 1 \rbrace$, and $0$ otherwise.  But the
first $2$ notes of ${\bf s}$ are $0$ and $1$, so the result holds.

Now assume the claim holds for $N' < N$; we prove it for $N$.
Now a value of $a$ in ${\bf s}[0..2^N-1]$ can occur in either an even or
odd position.  If it occurs in an even position, then it arises
from $-a$ occurring in ${\bf s}[0..2^{N-1} - 1]$.  If it occurs in
an odd position, then it arises from $a-1$ occurring in ${\bf s}[0..2^{N-1} - 1]$.  It follows that, for $N \geq 2$, that
$$r_a(N) = r_{-a} (N-1) + r_{a-1} (N-1).$$
Hence, using induction and the classical binomial coefficient identities
$$ {M \choose i} = {{M-1} \choose i} + {{M-1} \choose i-1}, $$
and 
$${M \choose i} = {M \choose {M-i}}  ,$$
we have
\begin{eqnarray*}
r_a (N) &=& r_{-a} (N-1) + r_{a-1} (N-1) \\
&=& {{N-2} \choose {\lfloor (N-1+a)/2 \rfloor} } + 
{{N-2} \choose {\lfloor (N-a)/2 \rfloor} }  \\
&=& 
\begin{cases}
{{N-2} \choose {(N+a)/2 - 1}} + {{N-2} \choose {(N-a)/2}}, &
	\text{if $N \equiv \modd{a} {2}$};  \\
{{N-2} \choose {(N+a-1)/2}} + {{N-2} \choose {(N-a-1)/2}}, &
	\text{if $N \not\equiv \modd{a} {2}$}; 
\end{cases}  \\
&=&
\begin{cases}
{{N-2} \choose {(N-2) - (N+a)/2 + 1}} + {{N-2} \choose {(N-a)/2}}, &
	\text{if $N \equiv \modd{a} {2}$};  \\
{{N-2} \choose {(N-2) - (N+a-1)/2}} + {{N-2} \choose {(N-a-1)/2}}, &
	\text{if $N \not\equiv \modd{a} {2}$}; 
\end{cases}  \\
&=&
\begin{cases}
{{N-2} \choose {(N-a)/2 - 1}} + {{N-2} \choose {(N-a)/2}}, &
	\text{if $N \equiv \modd{a} {2}$};  \\
{{N-2} \choose {(N-a-1)/2 - 1}} + {{N-2} \choose {(N-a-1)/2}},  &
	\text{if $N \not\equiv \modd{a} {2}$}; 
\end{cases}  \\
&=&
\begin{cases}
{{N-1} \choose {(N-a)/2}}, &
	\text{if $N \equiv \modd{a} {2}$};  \\
{{N-1} \choose {(N-a-1)/2}}, & 
	\text{if $N \not\equiv \modd{a} {2}$}; 
\end{cases}  \\
&=&
{{N-1} \choose {\lfloor (N-a)/2 \rfloor}},
\end{eqnarray*}
as desired.
\end{proof}

\begin{corollary}
The limiting density of occurrence of each note is $0$.
\end{corollary}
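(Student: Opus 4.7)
The plan is to combine the closed form $r_a(N) = \binom{N-1}{\lfloor(N-a)/2\rfloor}$ from the preceding theorem with a standard Stirling-type bound on the central binomial coefficient. Since density is defined as a limit over all indices, not just powers of $2$, I would also need a short monotonicity/sandwich step to bridge from the subsequence $M = 2^N$ to arbitrary $M$.

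First I would observe that for any row index $N-1$, the binomial coefficients $\binom{N-1}{k}$ are maximized at $k = \lfloor (N-1)/2 \rfloor$. In particular, regardless of the parity relationship between $N$ and $a$,
\[
r_a(N) \;=\; \binom{N-1}{\lfloor (N-a)/2 \rfloor} \;\leq\; \binom{N-1}{\lfloor (N-1)/2 \rfloor}.
\]
Next I would invoke the well-known asymptotic $\binom{N-1}{\lfloor (N-1)/2 \rfloor} \sim 2^{N-1}/\sqrt{\pi (N-1)/2}$ (which follows from Stirling's formula), or equivalently the elementary bound $\binom{N-1}{\lfloor (N-1)/2 \rfloor} \leq 2^{N-1}/\sqrt{(N-1)/2}$ valid for $N \geq 2$. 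Dividing by $2^N$ gives
\[
\frac{r_a(N)}{2^N} \;\leq\; \frac{1}{2\sqrt{(N-1)/2}} \;\longrightarrow\; 0.
\]
This establishes that the density of $a$ along the subsequence of indices of the form $2^N$ is zero.

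Finally, to pass from this subsequence to the full limit, I would use the fact that the counting function $M \mapsto |\{n < M : s(n) = a\}|$ is nondecreasing. Given any $M \geq 1$, choose $N$ with $2^{N-1} \leq M < 2^N$. Then
\[
\frac{|\{n < M : s(n) = a\}|}{M} \;\leq\; \frac{r_a(N)}{2^{N-1}} \;=\; 2 \cdot \frac{r_a(N)}{2^N},
\]
and the right-hand side tends to $0$ by the previous step. Hence the density exists and equals $0$ for every integer $a$.

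I do not expect any real obstacle here; the content of the corollary is essentially the observation that the central binomial coefficient is only a $\Theta(1/\sqrt{N})$ fraction of $2^{N-1}$, and the only care required is the (routine) sandwich argument to convert the power-of-two statement into a true density statement.
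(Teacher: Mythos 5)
Your proof is correct and follows the same route the paper intends: the paper's entire proof is ``Follows immediately from Stirling's formula,'' and you have simply supplied the details --- bounding $r_a(N)$ by the central binomial coefficient, applying the Stirling-type estimate, and adding the (correct) sandwich step to pass from indices of the form $2^N$ to arbitrary $M$.
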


\begin{proof}
Follows immediately from Stirling's formula.
\end{proof}

Interpreted musically, we conclude that
each individual note occurs more and more sparsely,
as $n \rightarrow \infty$; the sequence $\bf s$ is not uniformly
recurrent.

\section{Note pairs in the infinity series}

We now discuss those pairs $(i,j)$ that occur as two consecutive
notes in the infinity series; we call this a {\it note-pair}.
If there exists $n$ such that
$s(n) = i$ and $s(n+1) = j$, we say that the note-pair
$(i,j)$ is {\it attainable};
otherwise we say it is {\it unattainable}.

\begin{theorem}
The pair $(i,j)$ is attainable if any one of the following
conditions hold:
\begin{itemize}
\item[(a)]  $i > 0$ and $-i \leq j \leq i-1$; 

\item[(b)]  $j \geq 1$ and $1-j \leq i \leq j+1$ and
$i \not\equiv \modd{j} {2}$;

\item[(c)]  $j \leq -2$ and $j+2 \leq i \leq -j-2$ and $i \equiv
\modd{j} {2}$.
\end{itemize}
Otherwise $(i,j)$ is unattainable.
\label{c-thm}
\end{theorem}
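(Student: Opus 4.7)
The plan is to derive a recursive description of the set $A$ of attainable pairs from the defining recurrence, compute $A$ explicitly by iteration, and then verify that the result matches conditions (a)--(c) by case analysis.

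First, I would partition note-pairs $(s(n), s(n+1))$ by the parity of $n$. For $n = 2k$, the recurrence yields the pair $(-s(k), s(k)+1)$, which has the form $(-a, a+1)$ for $a = s(k)$. Lemma~\ref{lem1} gives $s(2^b - 1) = b$ and $s(2(2^b - 1)) = -b$ for $b \geq 1$, so every integer occurs as some $s(k)$, and the even-position pairs contribute exactly $S_0 := \{(i,j) : i + j = 1\}$. For $n = 2k+1$, the recurrence yields the pair $(s(k)+1, -s(k+1)) = T(s(k), s(k+1))$, where $T(i,j) := (i+1, -j)$, and $(s(k), s(k+1))$ is itself a note-pair. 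Hence $A = S_0 \cup T(A)$, which makes $A$ the smallest set containing $S_0$ and closed under $T$.

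Next, I would iterate $T$ explicitly. Since $T^k(i, j) = (i + k, (-1)^k j)$, a direct computation gives
$$ T^{2k}(S_0) = \{(i,j) : i + j = 2k+1\}, \qquad T^{2k+1}(S_0) = \{(i,j) : i - j = 2k+2\}. $$
Taking the union over $k \geq 0$ yields the compact characterization
$$ A = \{(i,j) : i \not\equiv j \pmod 2,\ i + j \geq 1\} \,\cup\, \{(i,j) : i \equiv j \pmod 2,\ i - j \geq 2\}. $$

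Finally, I would check that this description coincides with (a)--(c) by splitting on the parity of $i - j$. When $i \equiv j \pmod 2$ and $i - j \geq 2$, the pair satisfies (a) if $i + j \geq 0$ (the condition $i \geq 1$ being automatic from $i > j$ and $i + j \geq 0$ with equal parity) and satisfies (c) if $i + j \leq -2$. When $i \not\equiv j \pmod 2$ and $i + j \geq 1$, the pair satisfies (a) if $j \leq i - 1$ (which forces $i \geq 1$ via $i + j \geq 1$ and the parity constraint) and satisfies (b) otherwise, since $j \geq i + 1$ then gives $j \geq 1$ and $i \leq j + 1$. The reverse inclusions are immediate: each of (a), (b), (c) encodes a parity condition together with an inequality that implies $i + j \geq 1$ or $i - j \geq 2$. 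The main obstacle is not conceptual but merely organizational; once the case split is laid out, everything reduces to verifying inequalities on $i \pm j$.
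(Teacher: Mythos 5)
Your argument is correct in substance and takes a genuinely different, and in fact cleaner, route than the paper. The paper proves attainability by exhibiting explicit witnesses $n$ with binary expansions $1^a 0 1^c$ and $1^a 0 0 1^c$ (four cases keyed to (a)--(c)), and proves unattainability separately by a minimal-counterexample descent on $n \bmod 4$ (three further case families plus a table of values of $s$ at nearby positions). You instead observe that the even-position pairs are exactly the antidiagonal $S_0=\{(i,j): i+j=1\}$ (using surjectivity of $s$) and the odd-position pairs are exactly $T(A)$ with $T(i,j)=(i+1,-j)$, so that $A=\bigcup_{k\geq 0}T^k(S_0)$, which computes to $\{i\not\equiv j \pmod 2,\ i+j\geq 1\}\cup\{i\equiv j\pmod 2,\ i-j\geq 2\}$. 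I checked your final case analysis matching this against (a)--(c), including the points that $i>0$ in (a) and $j \geq 1$ in (b) are forced by the inequalities on $i\pm j$ together with parity; it is right. Your version delivers both directions of the theorem in one computation and makes transparent the remark that asymptotically only half of all pairs occur; the paper's version delivers explicit positions $n$ realizing each attainable pair.

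One step is misjustified as written: from the identity $A=S_0\cup T(A)$ you conclude that $A$ is the \emph{smallest} set containing $S_0$ and closed under $T$. That implication is false in general: since $T$ is a bijection of $\Zee^2$, the full set $\Zee^2$ also satisfies $X=S_0\cup T(X)$, so the fixed-point equation alone does not pin $A$ down from above. What you actually need is the inclusion $A\subseteq\bigcup_{k\geq 0}T^k(S_0)$, and this does hold, but by strong induction on the position rather than by appeal to the equation: the pair at an even position lies in $S_0$, and the pair at position $2k+1$ is $T$ applied to the pair at position $k<2k+1$, which by induction already lies in $\bigcup_{k\geq 0}T^k(S_0)$. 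Add that one-line induction and the proof is complete.
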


\begin{proof}
The proof has two parts.  In the first part we show that if 
integers $i,j$
obey any of the conditions (a)--(c) above, then the pair $(i,j)$
is attainable.
In the second part of the proof, we show that
the remaining pairs are unattainable.

The following cases cover all three cases (a)--(c):

\bigskip

\noindent Case 1:  $j \leq \min(-1,i-1)$ and $i \equiv \modd{j} {2}$.
Take $a = -(j+1)$ and $c = i-(j+1)$.  Then the inequalities
imply $a, c \geq 0$ and the congruence implies that $c$ is odd.
Take $(n)_2 = 1^a 0 1^c$.  Then
$(n+1)_2 = 1^{a+1} 0^c$.  So $s(n) = c-a = i$ and $s(n+1) = -(a+1) = j$.

\bigskip

\noindent Case 2:  $j \geq \max(2, 1-i)$ and $i \not\equiv \modd{j} {2}$.
Take $a = j-1$ and $c = i+j-1$.  Then the inequalities
imply $a, c \geq 0$ and the congruence implies that
$c$ is even.  Take $(n)_2 = 1^a 0 1^c$.  Then
$(n+1)_2 = 1^{a+1} 0^c$.  So $s(n) = c-a = i$ and $s(n+1) = a+1 = j$.

\bigskip

\noindent Case 3:
$i \geq 0$ and $1-i \leq j \leq 1$ and $i \not\equiv \modd{j} {2}$.
Take $a = 1-j$ and $c = i+j-1$.  Then the inequalities
imply $a, c \geq 0$ and the congruence implies that
$c$ is even.  Take $(n)_2 = 1^a 0 0 1^c$.  Then
$(n+1)_2 = 1^a 0 1 0^c$.  So $s(n) = a+c = i$ and $s(n+1) = 1-a = j$.

\bigskip

\noindent Case 4:
$i \geq 0$ and $0 \leq j \leq i-1$ and $i \equiv \modd{j} {2}$.
Take $a = j+1$ and $c = i-(j+1)$.  Then the inequalities
imply $a, c \geq 0$ and the congruence implies that
$c$ is odd.  Take $(n)_2 = 1^a 0 0 1^c$.  Then
$(n+1)_2 = 1^a 0 1 0^c$.  So $s(n) = a+c = i$ and $s(n+1) = a-1 = j$.

\bigskip

Cases (1)--(4) correspond in a somewhat complicated way to
parts (a)--(c) of the theorem above.  Table 1 below illustrates this
correspondence.

\begin{table}[H]
\begin{center}
\begin{tabular}{c|ccccccccccccccccccccccccc}
$i\backslash j$&$\overline{12}$&$\overline{11}$&$\overline{10}$
&$\overline{9}$&
$\overline{8}$&$\overline{7}$&$\overline{6}$&$\overline{5}$
&$\overline{4}$&$\overline{3}$
&$\overline{2}$&$\overline{1}$&0&1&2&3&4&5&6&7&8&9&10&11 \\
\hline
& \\
$\overline{10}$&1&A&5&5&5&5&5&5&5&5&5&5&5&5&5&5&5&5&5&5&5&5&A&2 \\
$\overline{9}$&6&1&A&5&5&5&5&5&5&5&5&5&5&5&5&5&5&5&5&5&5&A&2&6 \\
$\overline{8}$&1&6&1&A&5&5&5&5&5&5&5&5&5&5&5&5&5&5&5&5&A&2&6&2 \\
$\overline{7}$&6&1&6&1&A&5&5&5&5&5&5&5&5&5&5&5&5&5&5&A&2&6&2&6 \\
$\overline{6}$&1&6&1&6&1&A&5&5&5&5&5&5&5&5&5&5&5&5&A&2&6&2&6&2 \\
$\overline{5}$&6&1&6&1&6&1&A&5&5&5&5&5&5&5&5&5&5&A&2&6&2&6&2&6 \\
$\overline{4}$&1&6&1&6&1&6&1&A&5&5&5&5&5&5&5&5&A&2&6&2&6&2&6&2 \\
$\overline{3}$&6&1&6&1&6&1&6&1&A&5&5&5&5&5&5&A&2&6&2&6&2&6&2&6 \\
$\overline{2}$&1&6&1&6&1&6&1&6&1&A&5&5&5&5&A&2&6&2&6&2&6&2&6&2 \\
$\overline{1}$&6&1&6&1&6&1&6&1&6&1&A&5&5&A&2&6&2&6&2&6&2&6&2&6 \\
0& 1&6&1&6&1&6&1&6&1&6&1&A&A&3&6&2&6&2&6&2&6&2&6&2 \\
1& 7&1&7&1&7&1&7&1&7&1&7&1&3&7&2&7&2&7&2&7&2&7&2&7 \\
2& 1&7&1&7&1&7&1&7&1&7&1&3&4&3&7&2&7&2&7&2&7&2&7&2 \\
3& 7&1&7&1&7&1&7&1&7&1&3&1&3&4&2&7&2&7&2&7&2&7&2&7 \\
4& 1&7&1&7&1&7&1&7&1&3&1&3&4&3&4&2&7&2&7&2&7&2&7&2 \\
5& 7&1&7&1&7&1&7&1&3&1&3&1&3&4&2&4&2&7&2&7&2&7&2&7 \\
6& 1&7&1&7&1&7&1&3&1&3&1&3&4&3&4&2&4&2&7&2&7&2&7&2 \\
7& 7&1&7&1&7&1&3&1&3&1&3&1&3&4&2&4&2&4&2&7&2&7&2&7 \\
8& 1&7&1&7&1&3&1&3&1&3&1&3&4&3&4&2&4&2&4&2&7&2&7&2 \\
9& 7&1&7&1&3&1&3&1&3&1&3&1&3&4&2&4&2&4&2&4&2&7&2&7 \\
10& 1&7&1&3&1&3&1&3&1&3&1&3&4&3&4&2&4&2&4&2&4&2&7&2 \\
11& 7&1&3&1&3&1&3&1&3&1&3&1&3&4&2&4&2&4&2&4&2&4&2&7
\end{tabular}
\end{center}
\caption{Illustration of the cases in the proof}
\end{table}

The letter A represents the fact that both cases (5) and (6)
below hold.

The pairs
not covered by conditions (a)-(c) above can
be divided into three parts:

\bigskip

\noindent Case 5:  $i \leq 0$ and $i-1 \leq j \leq -i$.

\bigskip

\noindent Case 6:  $i \leq 0$ and 
($j \geq -i$ and $i \equiv \modd{j} {2}$) or
($j \leq i$ and $i \not\equiv \modd{j} {2}$).

\bigskip

\noindent Case 7:  $i > 0$ and 
($j \leq -(i+1)$ and $i \not\equiv \modd{j} {2}$) or
($j \geq i$ and $i \equiv \modd{j} {2}$).

We need to show all of these pairs are unattainable.

First, we need a lemma:

\begin{lemma}
Suppose $n = 4k+a$ for $0 \leq a \leq 3$, and
$s(n) = i$ and $s(n+1) = j$.
Then the values of $s(k)$, $s(2k)$, $s(2k+1)$, and $s(2k+2)$ are as follows:
\begin{table}[H]
\begin{center}
\begin{tabular}{cccccc}
$a$ & $s(k)$ & $s(2k)$ & $s(2k+1)$ & $s(2k+2)$ & $j$ \\
\hline
$0$ & $i$   & $-i$  & $i+1$ & --- & $1-i$ \\
$1$ & $1-i$ & $i-1$ & $2-i$ & --- & $i-2$ \\
$2$ & $-i-1$ & $i+1$ & $-i$ & --- & $1-i$ \\
$3$ & $i-2$ & $2-i$ & $i-1$ & $-j$ & ---  \\
\end{tabular}
\end{center}
\end{table}
\end{lemma}

\begin{proof}
Follows immediately from the defining recursion.
\end{proof}

We now show that Case 5 cannot occur.  Choose the smallest possible $n$
such that $s(n) = i$ and $s(n+1) = j$, over all
$i, j$ satisfying the conditions
$i \leq 0$ and $i-1 \leq j \leq -i$.  

From Table 2 above we see that if $n = 4k$ or $n = 4k+2$
we have $j = 1-i > -i$,
a contradiction.  Similarly, if $n = 4k+1$ then $j = i-2 < i-1$,
a contradiction.  Hence $n = 4k+3$.  

Now consider $n' := 2k+1 = (n-1)/2 < n$.  Let $i' := i-1$ and
$j' = -j$.  Note that $i' < i \leq 0$ and
$i' - 1 = i - 2 < -j - 2 < -j = j'$.
However $s(n') = i-1 = i'$ and $s(n'+1) = s(2k+2) = -s(4k+4) = -j = j'$,
contradicting the minimality of $n$.

\bigskip

Next we show that Case 6 cannot occur.  Choose the smallest possible $n$
such that $s(n) = i$ and $s(n+1) = j$,
over all $i, j$ satisfying
($j \geq -i$ and $i \equiv \modd{j} {2}$) or
($j \leq i$ and $i \not\equiv \modd{j} {2}$).

From Table 2 above we see that if $n = 4k$ or
$n = 4k+2$ then $j = 1-i > 0$ (since
$i \leq 0$).    So $i \equiv \modd{j} {2}$.  This contradicts $j = 1-i$.
Similarly, if $n = 4k+1$, then $j = i-2$.  Since $i \leq 0$ we have
$j < 0$ and hence $i \not\equiv \modd{j} {2}$.  This contradicts $j = i-2$.
Hence $n = 4k+3$.

Now consider $n' := 2k+1 = (n-1)/2 < n$.  Let $i' := i-1$ and
$j' = -j$.  Note that $i' < 0$.  There are now two subcases to
consider: (i) $j \geq -i$ and $i \equiv \modd{j} {2}$
and (ii) $j \leq i$ and $i \not\equiv \modd{j} {2}$.

\medskip

Subcase (i): The case $j = -i$ is already ruled out by Case 5.
So $j \geq 1-i$.  Then $j' = -j \leq i-1 = i'$.  
Furthermore $j' \not\equiv \modd{i'} {2}$.  
However $s(n') = i-1 = i'$ and
$s(n'+1) = s(2k+2) = -s(4k+4) = -j = j'$, contradicting the minimality
of $n$.

\medskip

Subcase (ii):  The pair where $j = i$ is already unattainable by Case 5.
So $j < i$.  Then $j' = -j > -i$, implying $j' \geq 1-i = -i'$.
Furthermore $i \equiv \modd{j} {2}$.  
Again $s(n') = i-1 = i'$ 
$s(n'+1) = s(2k+2) = -s(4k+4) = -j = j'$, contradicting the minimality
of $n$.

\bigskip

Finally, we now show that Case 7 cannot occur.
Suppose there is a pair of values $(s(n), s(n+1)) = (i,j)$ satisfying
the conditions $i > 0$ and either $j \leq -(i+1)$ and
$i \not\equiv \modd{j} {2}$, or $j \geq i$ and $i \equiv \modd{j} {2}$.
Among all such $(i,j)$, let $J$ be the minimum of the absolute values
of $j$.
Among all pairs of the form
$(i,\pm J)$, let $(I, J')$ be a pair with the smallest value of the
first coordinate (which is necessarily positive).

Suppose $n$ is such that $s(n) = I$ and $s(n+1) = J'$.
If $n = 4k$ or $n = 4k+2$ then from Table 2,
we get $J' = 1-I > (-1)-I$ and $J' = 1-I < 0$,
a contradiction.  If $n = 4k+1$ then from Table 2,
we get $J' = I-2 > -(I+1)$ (since $I > 0$)
and $J' = I-2 < I$, a contradiction.  Hence $n = 4k+3$.  

Then Table 2 implies that if we take $n' = 2k+1 = (n-1)/2 < n$, and
$I' = I-1$ 
and $s(n') = I'$ and $s(n'+1) = -J'$.
If $I'> 0$ then $(I', -J')$ is a pair whose second coordinate has
the same absolute value as $(I,J)$, but whose first coordinate is
smaller, a contradiction.
Otherwise $I' = 0$.  But then the pair
$(0, -J')$ is not attainable by (b), a contradiction.
\end{proof}

Next we consider the possible intervals that can occur in the infinity
series.  

\begin{corollary}
There exists $n$ such that $s(n+1) - s(n) = k$ if and only if
either $k < 0$, or $k > 0$ and $k$ odd.
\label{int}
\end{corollary}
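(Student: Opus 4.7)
The plan is to derive this corollary directly from Theorem~\ref{c-thm}: since $s(n+1) - s(n) = k$ is equivalent to the existence of an attainable pair $(i,j)$ with $j - i = k$, the task reduces to examining, for each $k$, whether any of the conditions (a)--(c) admits such a pair.

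For the ``if'' direction, I would exhibit explicit witnesses. Given $k \leq -1$, I would set $i := -k$ and $j := 0$; then $i > 0$ and $-i \leq j \leq i-1$, so condition (a) applies. Given $k > 0$ odd, I would set $i := 0$ and $j := k$; then $j \geq 1$, $1-j \leq 0 \leq j+1$, and $i$ and $j$ have opposite parities (since $k$ is odd), so condition (b) applies.

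For the ``only if'' direction, I would rule out $k = 0$ and positive even $k$ by inspecting each clause of Theorem~\ref{c-thm}. Condition (a) forces $j \leq i-1$ and hence $j - i \leq -1$, so it produces no non-negative difference. Condition (b) requires $i \not\equiv j \pmod{2}$, so $j - i$ is odd, ruling out $k = 0$ and all even $k$. Condition (c) forces $i \geq j+2$ and hence $j - i \leq -2$, again producing no non-negative difference. Combining these three observations shows that no attainable pair realizes $k = 0$ or any positive even $k$.

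I do not anticipate any genuine obstacle; the argument is pure bookkeeping of inequalities and parities on top of the classification already proved in Theorem~\ref{c-thm}. The only point that warrants a moment of care is making sure the witnesses in the ``if'' direction cover the edge cases, in particular $k = -1$ (handled by $i = 1$, $j = 0$) and $k = 1$ (handled by $i = 0$, $j = 1$), both of which are covered by the constructions above.
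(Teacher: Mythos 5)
Your proof is correct and is exactly the argument the paper intends: the paper's proof simply says the corollary ``follows immediately from Theorem~\ref{c-thm},'' and your case analysis of conditions (a)--(c), together with the explicit witnesses $(-k,0)$ for $k<0$ and $(0,k)$ for odd $k>0$, is the straightforward way to fill in that claim. No gaps.
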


\begin{proof}
Follows immediately from 
Theorem~\ref{c-thm}.
\end{proof}

\begin{corollary}
A note-pair $(a,b)$ never occurs at both an odd and even
position in $\bf s$.
\label{cor2}
\end{corollary}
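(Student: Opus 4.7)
The plan is to use a parity-sensitive invariant, namely the sum $a+b$, to distinguish note-pairs occurring at even positions from those occurring at odd positions.

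First I would handle the even case. Suppose $(a,b)$ occurs at an even position $n=2k$. By the defining recursion, $s(2k)=-s(k)$ and $s(2k+1)=s(k)+1$, so
\[
a+b \;=\; -s(k) + (s(k)+1) \;=\; 1.
\]
Thus every note-pair appearing at an even position satisfies $a+b=1$. (This is really just Observation~\ref{obs1} applied in the forward direction.)

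Next I would handle the odd case. Suppose $(a,b)$ occurs at an odd position $n=2k+1$. Then $s(2k+1)=s(k)+1$ and $s(2k+2)=-s(k+1)$, so
\[
a+b \;=\; s(k) - s(k+1) + 1.
\]
For this to equal $1$ we would need $s(k+1)-s(k)=0$. But Corollary~\ref{int} asserts that the attainable intervals $s(n+1)-s(n)$ are exactly the negative integers together with the positive odd integers; in particular $0$ is never attained. Hence $a+b\neq 1$ for every note-pair occurring at an odd position.

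Combining the two cases, a pair $(a,b)$ with $a+b=1$ can only occur at even positions, while a pair with $a+b\neq 1$ can only occur at odd positions. So no pair occurs at both parities. There is no real obstacle here; the only subtlety is noticing that the nonoccurrence of the interval $0$ (which is not obvious from the recursion itself) is exactly what prevents the odd-position sum from accidentally equalling $1$, and that fact has already been packaged as Corollary~\ref{int}.
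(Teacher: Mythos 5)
Your proof is correct and follows essentially the same route as the paper: both arguments reduce the claim to the impossibility of two equal consecutive notes (the zero interval), the paper by exhibiting the forbidden pair $(a-1,a-1)$ at position $k$ via Theorem~\ref{c-thm}, and you by observing that an odd-position pair with $a+b=1$ would force $s(k+1)-s(k)=0$, contradicting Corollary~\ref{int}. The sum invariant $a+b$ is just a tidy repackaging of the same computation, so no further comparison is needed.
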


\begin{proof}
Suppose $(a,b)$ occurs at an even position.  Then from the 
recurrence we have $b = 1-a$.  If it occurs at an 
odd position too, say $n = 2k+1$, then $s(k) = a-1$ and
$s(k+1) = -b$.  Then at position $k$ we have the pair
$(a-1, a-1)$, which by Theorem~\ref{c-thm} does not occur.
\end{proof}

\begin{corollary}
There are never five or more consecutive non-negative notes in $\bf s$.
\end{corollary}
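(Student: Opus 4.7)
The plan is to assume toward contradiction that there exist five consecutive positions $n, n{+}1, n{+}2, n{+}3, n{+}4$ with $s(n),\ldots,s(n+4) \geq 0$, and then descend to the halved indices via the defining recursion to produce a note-pair that Theorem~\ref{c-thm} has already ruled out.

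First, I would split on the parity of $n$. If $n = 2k$, then applying $s(2m) = -s(m)$ and $s(2m+1) = s(m)+1$ to the five indices yields the inequalities $-s(k) \geq 0$, $s(k)+1 \geq 0$, $-s(k+1) \geq 0$, $s(k+1)+1 \geq 0$, and $-s(k+2) \geq 0$. The first two force $s(k) \in \{-1,0\}$, the next two force $s(k+1) \in \{-1,0\}$, so the pair $(s(k), s(k+1))$ lies in $\{-1,0\}^2$. If instead $n = 2k+1$, the analogous computation using $s(n+1),\ldots,s(n+4)$ and $s(n+5) = s(k+2)+1$ (which, by shifting, we may assume exists after extending the run by at most one trailing step — or else we simply work with the recursion at $s(n+4) = -s(k+2)$ together with $s(n+2) = -s(k+1)$ and $s(n+3) = s(k+1)+1$) gives $s(k+1), s(k+2) \in \{-1,0\}$, so $(s(k+1), s(k+2)) \in \{-1,0\}^2$.

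Second, I would verify that each of the four possible pairs in $\{-1,0\}^2$, namely $(0,0)$, $(0,-1)$, $(-1,0)$, and $(-1,-1)$, fails conditions (a), (b), and (c) of Theorem~\ref{c-thm}: condition (a) requires $i > 0$, condition (b) requires $j \geq 1$, and condition (c) requires $j \leq -2$, none of which is met by these pairs. Hence each of these pairs is unattainable, contradicting what we derived in the previous paragraph.

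I do not expect a substantive obstacle; the only mildly delicate point is making sure the odd case is handled without accidentally appealing to a sixth position outside the given run of five. The cleanest way is to note that $s(n+1), s(n+2), s(n+3), s(n+4)$ already suffice when $n$ is odd — writing $n+1 = 2(k+1)$ and repeating the Case~1 argument shifted by one — so both cases reduce in effect to the even case at a halved index, and the conclusion follows uniformly from the unattainability of $\{-1,0\}^2$ as a set of note-pairs.
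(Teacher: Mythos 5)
Your proposal is correct and follows essentially the same route as the paper: both reduce a run of five non-negative notes to four consecutive non-negative notes aligned at an even position, pull back through the recursion to a note-pair in $\{-1,0\}^2$ at the halved index, and invoke Theorem~\ref{c-thm} to rule out all four such pairs. The only cosmetic difference is that you make the parity split explicit where the paper simply observes that some four of the five notes start at an even position.
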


\begin{proof}
Assume there are.  Then there are four consecutive non-negative notes
starting at an even position.  From the recursion,
these four notes, starting at position $n$, are of the form 
$a, 1-a, b, 1-b$.  Now $a \geq 0$ and $1-a \geq 0$ imply $a \in
\lbrace 0, 1 \rbrace$, and similarly for $b$.  
From the recursion, starting at position $n/2$ we must 
have the notes $(-a, -b)$.  However, from Theorem~\ref{c-thm}, none of
the note-pairs $\lbrace (0,0), (-1,0), (0,-1), (-1,-1) \rbrace$
occur.  This contradiction proves the result.
\end{proof}

In a similar fashion, we can prove there are never more than two
consecutive positive notes, or two consecutive negative notes, or
two consecutive non-positive notes, in the infinity series.

\section{Repetitions in the infinity series}

Repetitions in sequences have been an object of intense study
since the pioneering results of Axel Thue more than a hundred years ago
\cite{Thue:1906,Thue:1912,Berstel:1995}.    Thue proved that
the Thue-Morse sequence (i.e., the infinity sequence taken modulo
$2$) is overlap-free:  it contains no block of the form $xxa$ where
$x$ is a nonempty block and $a$ is the first number in $x$.

In this section we characterize close repetitions in the infinity
series.  We prove that the infinity series has an even stronger
avoidance property than the Thue-Morse sequence.

\begin{theorem}
If the infinity series contains a block of the form
$xyx$, with $x$ nonempty, then $|y| \geq 2|x|$.  In particular
$\bf s$ contains no two consecutive identical blocks.
\label{s-thm}
\end{theorem}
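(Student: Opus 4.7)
The plan is to argue by induction on $|xyx| = 2|x|+|y|$, using Observation~\ref{obs2} to compress any putative counterexample into a strictly smaller one. Write $m = |x|$, $\ell = |y|$, and let $n$, $n' = n + m + \ell$ be the starting positions of the two copies of $x$. Choose a counterexample minimizing $2m+\ell$.

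For the base case $m = 1$, we have $aya$ with $\ell \in \{0,1\}$. The case $\ell = 0$ is the note-pair $(a,a)$, which Theorem~\ref{c-thm} shows is unattainable. For $aba$, Observation~\ref{obs1} applied at the even-indexed position among the three relevant positions forces $b = 1-a$; then the defining recursion produces, at the corresponding halved positions, an adjacent pair of equal values (of the form $(-a,-a)$ or $(a{-}1,a{-}1)$, depending on the parity of $n$), again contradicting Theorem~\ref{c-thm}.

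For the inductive step $m \geq 2$, I would first show that $n$ and $n'$ must have the same parity. Otherwise, the first two letters of $x$, viewed as a note-pair, occur at both an even and an odd starting position, directly contradicting Corollary~\ref{cor2}. So assume $m + \ell$ is even. The key move is to extend $x$ to a pair-aligned block $x''$ by prepending $1 - (\text{first letter of } x)$ if $n$ is odd and appending $1 - (\text{last letter of } x)$ if $x$ ends at an even position. By Observation~\ref{obs1} these prescribed letters agree for both copies of $x$, so both instances extend to the same block $x''$. After extension, $x''$ starts at an even position and has even length, while the new gap satisfies $|y''| = \ell - \varepsilon$ for some $\varepsilon \in \{0,1,2\}$ and still obeys $|y''| < 2|x''|$. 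Compressing via the inverse of the substitution $g$ then yields a factor $x' y' x'$ of $\bf s$ with $|x'| = |x''|/2$ and $|y'| = |y''|/2$, and a routine check shows $2|x'| + |y'| < 2m + \ell$, contradicting minimality.

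The main obstacle is the degenerate subcase $\ell = 0$ with $n, n'$ both odd and $m$ even, where the two extended copies of $x''$ overlap and the recipe above fails. To handle it, I would instead extend the whole block $xx$ by one letter on each outer side, producing a pair-aligned factor of $\bf s$ of length $2m+2$ with period $m$. Compressing this block gives a factor of $\bf s$ of length $m+1$ with period $m/2$, which in turn contains a repetition $x'x'$ with $|x'| = m/2 \geq 1$, a strictly smaller counterexample that contradicts minimality and completes the induction.
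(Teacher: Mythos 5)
Your proof is correct and follows essentially the same strategy as the paper's: a minimal counterexample is eliminated by the note-pair theorem when $|x|=1$, by Corollary~\ref{cor2} when the two copies of $x$ begin at positions of opposite parity, and otherwise by pair-aligning via Observation~\ref{obs1} and compressing under the inverse of $g$. The only substantive difference is bookkeeping: you use the single well-founded measure $2|x|+|y|$ where the paper minimizes lexicographically over $(|y|,\, n,\, |x|)$, which lets you absorb the paper's separate ``earliest occurrence'' and $n=0$ subcases (and its shift-the-square trick for the overlapping $|y|=0$ case) into one uniform descent.
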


\begin{proof}
We call a block of notes of the form $xyx$ with $|y| < 2 |x|$ a {\it proximal
repetition}. Assume, contrary to what we want to prove, that
$\bf s$ has a proximal repetition $xyx$ occurring
for the first time at some position $n$.
Then, without loss of generality,
we can assume that $|y|$ is minimal over all proximal repetitions
occurring in $\bf s$.  Furthermore, we can assume that $n$ is as small
as possible over all occurrences of this $xyx$ in $\bf s$.  Finally,
we can assume that $|x|$ is as small as possible over all $xyx$ 
with $|y|$ minimal occurring at position $n$.
There are a number of
cases to consider.

\bigskip

\noindent Case 1:  $|x| = 1$.  If $\bf s$ contains $xyx$ with $|y| < 2|x|$
then $x = a$ and $y = b$ for single numbers $a, b$.

If $aba$ occurs beginning at an even position $n = 2k$, then from
Observation~\ref{obs1}, we know that $b$ immediately follows the second
$a$.  So $abab$ occurs at position $2k$.  Then from the recurrence
we know that $(-a) (-a)$ occurs at position $k$.  But
from Corollary~\ref{int} we know that this is impossible.

If $aba$ occurs beginning at an odd position $n = 2k+1$, then
from Observation~\ref{obs1}, we know that $b$ immediately precedes the first
$a$ in $\bf s$.  So $baba$ occurs at position $2k$, and we have already
ruled this out in the previous paragraph.

\bigskip

\noindent Case 2:  $|x| \geq 2$ and $|x| \not\equiv \modd{|y|} {2}$.
Then by considering the first
two notes of $x$, say $ab$, we have that $ab$ occurs beginning at both
an odd and an even position, contradicting Corollary~\ref{cor2}.

\bigskip

\noindent Case 3:  $|x| \geq 2$ and both $|x|, |y|$ even.  If the block $xyx$ occurs
starting at an even position $n = 2k$, then $x'y'x'$ occurs
at position $k$. Now $|x'| = |x|/2$ and $|y'| = |y|/2$ and
$x = g(x')$, $y = g(y')$, so $x'y'x'$ is a proximal repetition occurring
at position $k$.
If $|y| > 0$, then $|y'| < |y|$, contradicting our assumption that
$|y|$ was minimal.  If $|y| = 0$, and $n > 0$, then $x'y'x'$ occurs 
at position $n/2 < n$, contradicting the assumption that our $xyx$ occurs
at the earliest possible position.  Finally, if $|y| = 0$ and $n = 0$, then
$|x'| < |x|/2$, contradicting the minimality of $|x|$.

Otherwise $xyx$ occurs starting at an odd position $n = 2k+1$.
If $|y| = 0$ then write $x = wa$ for a single number $a$.  Since
$|x|$ is even and $xyx = wawa$ occurs beginning at an odd position,
the first $a$ is at an even position.  So another $a$ immediately
precedes the first $w$.  Then $awaw$ occurs starting at position $n = 2k$.
This contradicts our assumption that $xyx$ was the earliest occurrence.

Otherwise $|y|> 0$.  Write $x = a w$ for a single letter $a$ and
$y = z b$ for a single letter $b$.  Note that $|z|$ is odd.
Since both $|x|$ and $|y|$
are even, $b$ occurs at an even position and immediately precedes
the second occurrence of $x$.  So $b$ also immediately precedes
the first occurrence of $x$.  Thus $bxyx = bawzbaw$ occurs
at position $2k$.  Then
$|z| < |y| < 2|x| < 2|baw|$, so $(baw) z (baw)$ is a proximal repetition
with $|z| < |y|$. This contradicts our assumption that
$|y|$ was minimal.

\bigskip

\noindent Case 4:  $|x| \geq 2$ and both $|x|, |y|$ odd.  
Suppose $xyx$ begins at an even position, say $n = 2k$.  Then, writing
$y = az$ for a single number $a$,
we see that $a$ immediately follows the first $x$
and occurs at an odd position.  So $a$ also follows the second $x$
and we know $xazxa$ occurs at position $n$.  Since $|xa|$ and $|z|$ are
both even, there exist $x', y'$ with $g(x') = xa$ and $g(y') = z$.
So $x' y' x'$ occurs at position $k$.  However
$|y'| = |z|/2 = (|y|-1)/2 < |x| - 1/2 < |x| = 2|x'| - 1$, and so
$x'y'x'$ is a proximal repetition with $|y'| < |y|$.  This contradicts
our assumption that $|y|$ was minimal.

Similarly, if $xyx$ begins at an odd position, say $n= 2k+1$, then
we can write $y = za$ for a single number $a$.  Then $a$ occurs at
an even position and immediately precedes the second $x$, so it also occurs
immediately before the first $x$.  Thus $axzax$ occurs at position $n = 2k$.
Then we can argue about $ax$ and $z$ exactly as in the preceding paragraph
to get a contradiction.
\end{proof}

We remark that Theorem~\ref{s-thm} is optimal since, for example,
at position $1$ of $\bf s$ we have $(1, -1, 2, 1)$, which corresponds
to $x = 1$, $y = (-1, 2)$ and $|y| = 2 |x|$.  By applying $g$ to this
occurrence we find larger and larger occurrences of $xyx$ satisfying the
same equality.

Musically, we may say that although each phrase in the infinity series
occurs infinitely often, we never hear exactly the same phrase twice without
a relatively long delay between the two occurrences.  This may partially
account for the impression of neverending novelty in the music.

\section{Characterizing the infinity series}

The infinity series has been called unique \cite{Mortensen:2014c}.
To make this kind of assertion mathematically rigorous, however, we
need to decide on the main properties of the sequence to see if there
could be other sequences meeting the criteria.

Although these main properties could be subject to debate, here
are a few of the properties of $\bf s$ observed by us and others:

\bigskip

1.  $s$ is a surjective map from $\Enn$ to $\Zee$.
That is, for all $a \in \Zee$ there
exists $n$ such that $s(n) = a$.

\bigskip

\begin{proof}
We have $s(0) = 0$.  If $a > 0$, then it is easy to see 
that $s(2^a - 1) = a$.
If $a < 0$, then it is easy to see that $s(1-2^{-a}) = a$.
\end{proof}

\bigskip

2.  It is $k$-self-similar \cite{Mortensen:2014c}.  That is, there exists a $k \geq 2$
such that for all $i \geq 0$ and
$0 \leq j < k^i$, the subsequence $(s(k^i n + j))_{n \geq 0}$
is either of the form $s(n) + a$ for some $a$ (in other words, the
sequence transposed by $a$ half-steps) or of the form
$-s(n) + a$ for some $a$ (in other words, the sequence inverted
and then transposed by $a$ half-steps).  In the \ng\ sequence, we
have $k = 2$.

\bigskip

3.  Every interval occurs.  That is, for all $i \not= 0$,
there are two consecutive notes that are exactly $i$ half-steps apart.
More precisely, for all $i > 0$ there exists $n$ such that
$$|s(n+1)-s(n)| = i.$$
Note, however, that by Theorem~\ref{c-thm}, it is {\it not\/} true that every
possible note-pair occurs somewhere.  In fact, asymptotically, only
half of all possible note-pairs occur.  Furthermore, some intervals
occur only in a descending form; by Corollary~\ref{int} this is true exactly
of all even intervals.

\bigskip

4.  Every interval that occurs, occurs beginning at infinitely many
different notes.  That is, for all negative $i$ and all positive
odd $i$, there are infinitely many distinct $j$ such that there 
exists $n$ with
$s(n) = j$ and $s(n+1) = i+j$.

\bigskip

5.  Runs of consecutive negative (resp., positive, non-negative, non-positive)
notes are of bounded length.

\bigskip

6.  It is recurrent; that is, to say, every block of values that
occurs, occurs infinitely often.

\begin{proof}
Let $(a_0, a_1, \ldots, a_{j-1})$ be a block of $j$ consecutive
values of the infinity series, for some $j \geq 1$, that is,
suppose there exists $n$ such that $s(n+i) = a_i$ for $0 \leq i < j$.
Then there exists a power of $2$, say $2^N$, such that
$j \leq 2^N$.   It therefore suffices to show that
the block $B := (s(0), s(1), \ldots, s(2^N-1))$ appears infinitely
often.

Consider the block 
	$$ A_t = (s(5 \cdot 2^{N+t}), s(5 \cdot 2^{N+t} + 1), \ldots,
	s(5 \cdot 2^{N+t} + 2^N - 1)) .$$
We claim that $A_t = B$ for all $t \geq 0$.
To see this, note that the binary expansion of
$5 \cdot 2^{N+t} + i$, for $0 \leq i < 2^N$, looks like
$101 0^t$ followed by $w$, where $w$ is the binary expansion of
$i$ padded on the left with zeros to make its length $N$.
It now follows from Lemma~\ref{lem1} that $s(5 \cdot 2^{N+t} + i) =  
a_i$ for $0 \leq i < 2^N$, thus producing a new occurrence of 
$B$ for each $t \geq 0$.
\end{proof}

\bigskip

7.  It is slowly-growing, that is, $|s(n)| = O(\log n)$, and
furthermore there exists a constant $c$ such
that $|s(n)| > c \log n$ infinitely often.  Musically, this corresponds
to novel notes appearing infinitely often, but with longer and longer
delays between their first appearances.

\bigskip

8.  It is non-repetitive or ``squarefree''.  That is, for all
$n \geq 1$ and $i \geq 0$ the phrase given by the notes
$(s(i), s(i+1), \ldots, s(i+n-1))$
is never followed immediately by the same phrase repeated again.
We proved an even stronger statement in Theorem~\ref{s-thm}.

\bigskip

Are there other sequences with the same eight properties?  A brief computer
search turned up many others.  For example, consider the sequence
${\bf t} = (t(n))_{n \geq 0}$
given by the rules
\begin{eqnarray*}
t(0) &=& 0 \\
t(4n) &=& t(n) \\
t(4n+1) &=& t(n) - 2 \\
t(4n+2) &=& -t(n) - 1 \\
t(4n+3) &=& t(n) + 2 .
\end{eqnarray*}

The first few terms are
\begin{table}[H]
\begin{center}
\begin{tabular}{c|ccccccccccccccccccccc}
$n$ & 0 & 1 & 2 & 3 & 4 & 5 & 6 & 7 & 8 & 9 & 10 & 11 & 12 & 13 & 14 &
15 & 16 & 17 & 18 \\
\hline 
$t(n)$ & 0 & $-2$ & $-1$ & 2 & $-2$ & $-4$ & 1 & 0 & $-1$ & $-3$ & 0
& 1 & 2 & 0  &$-3$  &4  &$-2$  &$-4$  &1  \\
\end{tabular}
\end{center}
\end{table}

It is not hard to prove, along the lines of the
the proofs given above, that $\bf t$ shares all eight properties with $\bf s$.
So in fact, $\bf s$ is not really unique at all.

\section{Variations}

\ng\ also explored variations on the 
infinity series.
The descriptions given in \cite{Mortensen:2014d,Mortensen:2014e} are 
a little imprecise, so we reformulate them below.

First variation:
\begin{eqnarray*}
u(0) &=& 0 \\
u(3n) &=& -u(n) \\
u(3n+1) &=& u(n) - 2\\
u(3n+2) &=& u(n) - 1 
\end{eqnarray*}

The first few terms are
\begin{table}[H]
\begin{center}
\begin{tabular}{c|ccccccccccccccccccccc}
$n$ & 0 & 1 & 2 & 3 & 4 & 5 & 6 & 7 & 8 & 9 & 10 & 11 & 12 & 13 & 14 &
15 & 16 & 17 & 18 \\
\hline 
$u(n)$ & 0 & $-2$ & $-1$ & 2 & $-4$ & $-3$ & 1 & $-3$ & $-2$ & $-2$
& 0 & 1 & 4 &
$-6$ & $-5$ & 3 & $-5$ & $-4$ & $-1$ \\
\end{tabular}
\end{center}
\end{table}

Note that this sequence has a repetition of order three (``cube'')
beginning at position $32$:  $(-1,-1,-1)$.  This, together with the
recursion, ensures that there will be arbitrarily large such
repetitions in the sequence.  It fails property 8 of the previous section.

Second variation:
\begin{eqnarray*}
v(0) &=& 0 \\
v(3n) &=& -v(n) \\
v(3n+1) &=& v(n) - 3 \\
v(3n+2) &=& -2-v(n) 
\end{eqnarray*}

\begin{table}[H]
\begin{center}
\begin{tabular}{c|ccccccccccccccccccccc}
$n$ & 0 & 1 & 2 & 3 & 4 & 5 & 6 & 7 & 8 & 9 & 10 & 11 & 12 & 13 & 14 &
15 & 16 & 17 & 18 \\
\hline 
$v(n)$ & 0 & $-3$ & $-2$ & 3 & $-6$ & 1 & 2 & $-5$ & 0 & $-3$ & 0 & $-5$ & 6 &
$-9$ & 4 & $-1$ & $-2$ & $-3$ & $-2$ \\
\end{tabular}
\end{center}
\end{table}

This sequence fails to have property 3:  not every interval occurs.  In
fact, only intervals of an odd number of half steps occur.  However, it
has all the other properties listed in the previous section.

\end{document}